\documentclass[12pt]{aptpub}

\usepackage{amsmath,amstext,url}
\usepackage{cases}
\usepackage{color}
\usepackage{appendix}

\oddsidemargin=-0.8cm \evensidemargin=-0.8cm \textwidth=16.4cm
\textheight=22.50cm \topmargin=-1.0cm



\numberwithin{equation}{section} 

\makeatletter \@addtoreset{equation}{section}

\makeatletter \@addtoreset{lemma}{section}

\makeatletter \@addtoreset{theorem}{section}

\makeatletter \@addtoreset{proposition}{section}

\makeatletter \@addtoreset{corollary}{section}

\makeatletter \@addtoreset{remark}{section}

\makeatletter \@addtoreset{definition}{section}

\makeatletter \@addtoreset{example}{section}




\begin{document}

\thispagestyle{firstpg}

\vspace*{1.5pc} \noindent \normalsize\textbf{\Large { Ergodic property for Galton-Watson processes in which individuals have variable lifetimes}} \hfill

\vspace{12pt} \hspace*{0.75pc}{\small\textrm{\uppercase{Jiangrui Tan
}}}\hspace{-2pt}$^{*}$, {\small\textit{Beijing Normal University }}

\hspace*{0.75pc}{\small\textrm{\uppercase{Junping Li}}}
\hspace{-2pt}$^{**}$, {\small\textit{Central South University }}

\par
\footnote{\hspace*{-0.75pc}$^{*}\,$Postal address:
 School of Mathematical Sciences, Laboratory of Mathematics and Complex Systems,
Beijing Normal University, Beijing, 100875, China. E-mail address:
670816453@qq.com }

\par
\footnote{\hspace*{-0.75pc}$^{**}\,$Postal address:
 School of Mathematics and Statistics, Central
South University, Changsha, 410083, China. E-mail address:
jpli@mail.csu.edu.cn }


\par
\renewenvironment{abstract}{%
\vspace{8pt} \vspace{0.1pc} \hspace*{0.25pc}
\begin{minipage}{14cm}
\footnotesize
{\bf Abstract}\\[1ex]
\hspace*{0.5pc}} {\end{minipage}}
\begin{abstract}
   This paper is concerned with an extended Galton-Watson process so as to allow individuals to live and reproduce for more than one unit time. We assume that each individual can live $k$ seasons (time-units) with probability $h_k$, and produce $m$ offspring with probability $p_m$ during each season. These can be seen as Galton-Watson processes with countably infinitely many types in which particles of type $i$ may only have offspring of type $i+1$ and type $1$. Let $\emph{\textbf{M}}$ be its mean progeny matrix and $\gamma$ be the convergence radius of the power series $\sum_{k\geq 0}r^k(\emph{\textbf{M}}^k)_{ij}$. We first derive formula of calculating $\gamma$ and show that $\gamma$, in supercritical case, is actually the extinction probability of a Galton-Watson process. Next, we give clear criteria for $\emph{\textbf{M}}$ to be $\gamma$-transient, $\gamma$-positive and $\gamma$-null recurrent from which the ergodic property of the process is discussed. The criteria for $\gamma$ and $\gamma$-recurrence of $\emph{\textbf{M}}$ rely on the properties of lifetime distribution which are easier to be verified than current results. Finally, we show the asymptotic behavior of the total population size of each type of individuals under certain conditions which illustrates the evolution of Galton-Watson process in which individuals have variable lifetimes.
\end{abstract}

\vspace*{12pt} \hspace*{2.25pc}
\parbox[b]{26.75pc}{{
}}
{\footnotesize {\bf Keywords:}
Galton-Watson process; variable lifetimes; convergence radius.
\par
\normalsize

\renewcommand{\amsprimary}[1]{
     \vspace*{8pt}
     \hspace*{2.25pc}
     \parbox[b]{24.75pc}{\scriptsize
    AMS 2000 Subject Classification: Primary 60J27 Secondary 60J35
     {\uppercase{#1}}}\par\normalsize}
\renewcommand{\ams}[2]{
     \vspace*{8pt}
     \hspace*{2.25pc}
     \parbox[b]{24.75pc}{\scriptsize
     AMS 2000 SUBJECT CLASSIFICATION: PRIMARY
     {\uppercase{#1}}\\ \phantom{
     AMS 2000
     SUBJECT CLASSIFICATION:
     }
    SECONDARY
 {\uppercase{#2}}}\par\normalsize}

\ams{60J27}{60J35}

\par
\vspace{5mm}
 \setcounter{section}{1}
 \setcounter{equation}{0}
 \setcounter{theorem}{0}
 \setcounter{lemma}{0}
 \setcounter{definition}{0}
 \setcounter{corollary}{0}
 \setcounter{proposition}{0}
\noindent {\large \bf 1. Introduction and Preliminaries}
\vspace{3mm}
\par
  A Galton-Watson process in which individuals have variable lifetimes is a discrete time stochastic process $\{Z_n;n\geq 0\}$ on $\mathbb{Z}_+=\{0,1,\cdots\}$. Let $\{p_k;k\geq 0\}$ and $\{h_k;k\geq 0\}$ denote the offspring distribution function and the lifetime distribution function, and let $f(s)$ and $g(s)$ denote the p.g.f of $\{p_k;k\geq 0\}$ and $\{h_k;k\geq 0\}$ respectively, $m$ the offspring mean and $l$ the lifetime mean, that is,
   \begin{equation}\label{ml}
   f(s)=\sum_{k=0}^{\infty}p_ks^{k},~~~~g(s)=\sum_{k=0}^{\infty}h_ks^{k},~~~m=\sum_{k=0}^{\infty}kp_k~~~\text{and}~~~l=\sum_{k=0}^{\infty}kh_k.
   \end{equation}
  Further, we define
   \begin{equation}\label{e1.1}
       q_k=
\begin{cases}
\sum_{i=k}^{\infty}h_i/\sum_{i=k-1}^{\infty}h_i, \ \ \ if~ \sum_{i=k-1}^{\infty}h_k \neq 0\\
0, \ \ \ \ \ \ \ \ \ \ \ \ \ \ \ \ \ \ \ \ \ \ \ \ \ \  if~ \sum_{i=k-1}^{\infty}h_k=0,
\end{cases}
\end{equation}
provide $\prod_{i=1}^{0}=1$. It is easy to see that
\begin{equation}\label{e1.2}
       h_k=(1-q_{k+1})\prod_{i=1}^{k}q_i.
\end{equation}
If $L$ denotes the life length of an individual, then $h_k=\mathbb{P}(L=k)$ and $q_k=\mathbb{P}(L\geq k|L\geq k-1)$.
\par
 The process can be regarded as representing an evolving population of individuals. It starts at time $0$ with $Z_0$ newborn individuals, at the end of first season, each individual produces $k$ ($k\geq 0$) offspring and remains alive with probability $q_1p_k$, or dies with probability $1-q_1$. At the end of $n$th season, an individual of age $j<n$ (alive for $j$ seasons) has a probability $q_jp_k$ of producing $k$ ($k\geq0$) offspring and remains alive, or it has a probability $1-q_j$ of immediate death. When $h_0>0$, it means there are individuals that die at end of their first season without any descendants.
 \par
 Whittle \cite{Whittle} was first to consider Galton-Watson processes in which individuals have variable lifetimes and gave the following extinction criterion.
 \begin{proposition}\label{pro} The extinction probability $q$ of a Galton-Watson process in which individuals have lifetimes $\{Z_n;n\geq0\}$ is the smallest nonnegative root of $g(f(s))=s$ in $[0,1]$, where $f(s)$ and $g(s)$ are defined in (\ref{ml}). Moreover, $q=1$ if and only if $f'(1)g'(1)\leq1$.
 \end{proposition}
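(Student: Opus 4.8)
The plan is to remove the timing of births from the picture and reduce the extinction problem to classical single--type Galton--Watson theory applied to the genealogical tree of the population. Concretely, I would first compute, for one newborn individual, the probability generating function of the \emph{total} number of offspring it ever produces: conditioning on the individual living exactly $k$ seasons (probability $h_k$), it reproduces once in each of those seasons, independently, with offspring p.g.f.\ $f$, so its conditional progeny p.g.f.\ is $f(s)^k$, and removing the conditioning gives $\sum_{k\ge 0}h_k f(s)^k=g(f(s))$. Since distinct individuals reproduce independently with this common law, the tree that records only the ancestor--descendant relation, with birth times forgotten, is exactly a Galton--Watson tree with offspring p.g.f.\ $G(s):=g(f(s))$.

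Next I would show that extinction of $\{Z_n\}$ is the same event as finiteness of this genealogical tree. In one direction, if the tree is finite then, lifetimes being a.s.\ finite, every individual has died by some finite time, so $Z_n=0$ for all large $n$. In the other direction, each individual a.s.\ has finitely many children, so the tree is a.s.\ locally finite; by K\"onig's lemma an infinite tree has an infinite ray, along which birth times strictly increase to infinity, forcing $Z_n\ge 1$ for arbitrarily large $n$, and since $Z_n=0$ implies $Z_{n+1}=0$ this rules out extinction. I expect this to be the main obstacle: $\{Z_n\}$ is not itself a Markov chain (reproduction depends on age), so the memoryless branching structure must be located in the genealogy rather than directly in the sequence $\{Z_n\}$, and one has to check carefully that forgetting birth times genuinely yields i.i.d.\ offspring counts.

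The remaining steps are standard. Letting $q$ be the extinction probability from a single individual and conditioning on its total progeny $N$ (with p.g.f.\ $G$) gives $q=\mathbb{E}[q^{N}]=g(f(q))$, so $q$ is a root of $g(f(s))=s$; and the iterates $q_0=0$, $q_{n+1}=G(q_n)$ increase to $q$ while satisfying $q_n\le r$ for every nonnegative root $r$ of $G(s)=s$ (by monotonicity of $G$ and induction on $n$), so $q$ is the smallest nonnegative root in $[0,1]$. For the mean criterion I would differentiate: since $f(1)=1$, the chain rule gives $G'(1)=g'(1)f'(1)=l\,m$, and the classical dichotomy --- if $G$ is not the identity then $G(s)>s$ on $[0,1)$ precisely when $G'(1)\le 1$ --- yields $q=1$ if and only if $f'(1)g'(1)\le 1$, the only degenerate exception being $p_1=h_1=1$, where $G(s)\equiv s$.
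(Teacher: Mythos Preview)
Your argument is correct and is essentially Whittle's original one: collapse the temporal structure by recording only total lifetime progeny, observe that an individual living $k$ seasons has conditional progeny p.g.f.\ $f(s)^k$, average over lifetime to get $G(s)=g(f(s))$, and invoke the classical single-type theory. The identification of extinction of $\{Z_n\}$ with finiteness of the genealogical tree via K\"onig's lemma is clean; note that assumption A2 in the paper ($q_i>0$ for all $i$) excludes the degenerate case $h_1=1$ you flag at the end.

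The paper itself does not give a proof of this proposition: it is stated with attribution to Whittle and no argument. The only related derivation in the paper appears later, for a different purpose (showing global and partial extinction coincide): there the authors write the multi-type fixed-point equation $\textbf{f}(\textbf{s})=\textbf{s}$ componentwise as $1-q_i+q_is_{i+1}f(s_1)=s_i$ and iterate in $i$ using $h_k=(1-q_{k+1})\prod_{i\le k}q_i$ to recover $g(f(s_1))=s_1$. That is an alternative route to the fixed-point equation via the GWCIMT framework rather than via the embedded single-type tree, but it is not presented as a proof of the proposition, and it does not by itself supply the minimality of $q$ or the mean criterion, both of which your reduction to the classical theory handles directly.
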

  \par
  In this paper, we look at the process $\{Z_n;n\geq0\}$ as a Galton-Watson process with countably infinitely many types (we use the abbreviation GWCIMT to represent such process).
 Consider a GWCIMT $\{\emph{\textbf{Z}}_{n}=(Z_n^{(1)},Z_n^{(2)},Z_n^{(3)},\cdots); n\geq 0\}$ defined on $\mathbb{Z}_+^{\infty}$, where $Z_{n}^{(t)}$ represents the number of individuals of type $t$ particle at the $n$th generation, and $t$ in the type-set $\chi=\{1,2,3,\cdots\}$. The transition function is
\begin{eqnarray*}\
   \mathbb{P}(\emph{\textbf{i}},\emph{\textbf{j}})&=& \mathbb{P}(\emph{\textbf{Z}}_{n+1}=\emph{\textbf{j}}|\emph{\textbf{Z}}_{n}=
   \emph{\textbf{i})}\ \ \ \ \ \ \ \ \ \ \ \ \ \emph{\textbf{i}},\emph{\textbf{j}} \in \mathbb{Z}_+^{\infty} \\
   &=& \text{the coefficient of }\textbf{s}^{\emph{\textbf{j}}}\text{ in } [\emph{\textbf{f}}(\emph{\textbf{s}})]^{\emph{\textbf{i}}},
\end{eqnarray*}
where $\emph{\textbf{f}}(\emph{\textbf{s}})=(\emph{f}^{(1)}(\emph{\textbf{s}}),\emph{f}^{(2)}(\emph{\textbf{s}}),\cdots)$ and
\begin{eqnarray*}\
\emph{f}^{(i)}(\emph{\textbf{s}})=\sum_{\emph{\textbf{j}}\in \mathbb{Z}_+^{\infty}}p_{\emph{\textbf{j}}}^{i}\prod_{k\in\chi}s_k^{j_k}.~~~~~~~~\textbf{s}\in [0,1]^{\chi}
\end{eqnarray*}
 $p_{j_{1},j_{2},j_{3}\cdots}^{(i)}$ is the probability that a type $i$ parent individual produces $j_{1}$ particles of type $1$, $j_{2}$ individuals of type $2$,$\cdots$, $j_{n}$ individuals of type $n$ and so on.
 \par
Naturally, if we think of an individual of age $i$ as type $i+1$ individual, then the only possible transition is from $k$th type to $1$th type and $(k+1)$th type for $k>0$. Hence we can write out the specific expression of $p_{\emph{\textbf{j}}}^{i}$, that is
\begin{equation}\label{tran}
  p_{j_{1},j_{2},j_{3}\cdots}^{(i)}=
\begin{cases}
  1-q_{i}, \ \ \ \ \ \ \ \ \ \ \ \ \ \ \ j_{1},j_{2},\cdots=0 \\
  q_{i}p_{k},\ \ \ \ \ \ \ \ \ \ \ \ \ \ \ \ \ \ j_{1}=k,j_{i+1}=1,j_{2},\cdots,j_{i},j_{i+2},\cdots=0 \\
  0,\ \ \ \ \ \ \ \ \ \ \ \ \ \ \ \ \ \ \ \ \ otherwise.
\end{cases}
\end{equation}
We can also calculate the mean progeny matrix $\emph{\textbf{M}}=(m_{ij};i,j\geq 1)$ of $\{\emph{\textbf{Z}}_n;n\geq0\}$ with
\begin{equation}\label{mean}
m_{ij}=
\begin{cases}
mq_i,\ \ \ & \mbox{if}~~~j=1 \\
q_i,\ \ \  & \text{if}~~~j=i+1 \\
0, \ \ \ \ & otherwise,
\end{cases}
\end{equation}
where $m_{ij}=\mathbb{E}Z_{1j}^{(i)}$ represents the expected number of type $j$ offspring of a single type $i$ individual in one generation. We give a natural classification of Galton-Watson process in which individuals have variable lifetimes according to the extinction criterion of Proposition \ref{pro}.
\begin{definition}\
 A Galton-Watson process with countably infinitely many types is called a Galton-Watson process in which individuals have variable lifetimes if its transition function satisfies (\ref{tran}), where $\{q_k;k>0\}$ and $\{p_k;k\geq0\}$ are defined in (\ref{ml}) and (\ref{e1.1}). Furthermore, the process is called subcritical if $ml<1$, critical if $ml=1$, and supercritical if $ml>1$, where $m,l$ are defined in (\ref{ml}).

\end{definition}
\par

 Obviously, if $h_1=1$, then $\{Z^{(1)}_n; n\geq 0\}$ is actually a classical Galton-Watson process.
If the lifetimes variable of each individual is bounded (i.e., there exists $k>0$, $h_i=0$ for all $i\geq k$), then $\{\emph{\textbf{Z}}_{n}; n\geq0\}$ degenerates to a classical multi-type Galton-Watson process with a finite number of types. To avoid trivialities, we make the following basic assumptions throughout this paper:
\begin{flalign*}
&\emph{\bf \text{A1:}}~\emph{\textbf{Z}}_0=(1,0,0,\cdots).&\\
&\emph{\bf \text{A2:}}~q_i>0~~\text{for all}~~i>0.&\\
&\emph{\bf \text{A3:}}~ 0<m<\infty.&
\end{flalign*}
\par
    A matrix $\emph{\textbf{A}}$ is said to be irreducible if for any pair $(i,j)$ there exists $n\geq 1$ such that $(\emph{\textbf{A}}^{n})_{ij}>0$. A GWCIMT is said to be irreducible if and only if its mean progeny matrix is irreducible, then the mean matrix $\emph{\textbf{M}}$ of $\{\emph{\textbf{Z}}_n; n\geq 0\}$ is irreducible consequently by assumption A2 and (\ref{mean}).

\par
 When the number of types is finite and the mean matrix of the process is irreducible, it is well-known that the extinction probability vector $\emph{\textbf{q}}<\textbf{1}$ if and only if the Perron-Frobenius eigenvalue, or in another words the spectral radius of mean matix is strictly greater than $1$ (see \cite{Athreya1972Branching}). When the type is countably infinite, the analogue of Perron-Frobenius eigenvalue often refers to the convergence radius of the power series $\sum_{k\geq 0}r^k(\emph{\textbf{A}}^k)_{ij}$, where $\emph{\textbf{A}}$ is the mean progeny matrix of the process. For any irreducible matrix $\emph{\textbf{A}}$, Vere-Jones \cite{Jone} proves that there exists a positive number $\tau$,
 \begin{equation}\label{e4.1}
\lim\limits_{k\rightarrow\infty}\{(\emph{\textbf{A}}^k)_{ij}\}^{1/k}=1/\tau.
\end{equation}
 for any pair $(i,j)$.
  $\tau$ and $1/\tau$ is often referred to as the convergence radius and the convergence norm of $\emph{\textbf{A}}$ respectively. Note that the convergence norm is equivalent to spectral radius of $\emph{\textbf{A}}$ in the finite type case. Sagitov \cite[Definition 7]{Sagitov2013Linear} gives a natural classification for GWCIMT according to the asymptotic properties of its mean matrix $\emph{\textbf{A}}^{n}=(a_{ij}^{(n)})_{i,j=1}^{\infty}$ as $n\rightarrow\infty$.
\par
\begin{definition}\ \label{def}
An irreducible matrix $\emph{\textbf{A}}$ is called $\tau\emph{\text{-recurrent}}$ or $\tau\emph{\text{-transient}}$ depending on the divergence or the convergence of the series $\sum_{k\geq 0}\tau^k(\emph{\textbf{A}}^k)_{ij}$. A $\tau\emph{\text{-recurrent}}$ matrix $\emph{\textbf{A}}$ is called $\tau\emph{\text{-positive}}$ if $\lim\limits_{k\rightarrow\infty}\tau^k(\emph{\textbf{A}}^k)_{ij}>0$ for some pair $(i,j)$ and hence for all pairs $(i,j)$, and it is called $\tau\emph{\text{-null}}$ if this limit is zero. Furthermore, a Galton-Watson process with countably many types is said to be $\emph{\text{transient}}$ \{$\emph{\text{positive recurrent}}$, $\emph{\text{null recurrent}}$\} if its mean matrix is $\tau\emph{\text{-transient}}$ \{$\tau\emph{\text{-positive}}$, $\tau\emph{\text{-null}}$\}.
\end{definition}
\par
  For a general GWCMIT with offspring generating function $\emph{\textbf{g}}(\emph{\textbf{s}})$, there are two different types of extinction: global extinction $\emph{\textbf{q}}$ and partial extinction $\tilde{\emph{\textbf{q}}}$ whose entries contain the probability of global extinction and partial extinction respectively. $\emph{\textbf{q}}$ and $\tilde{\emph{\textbf{q}}}$ are both the solution of $\emph{\textbf{g}}(\emph{\textbf{s}})=(\emph{\textbf{s}})$ and it is possible for the population of each type to become extinct almost surely ($\tilde{\emph{\textbf{q}}}=\textbf{1}$) while the total population size explodes with positive probability ($\emph{\textbf{q}}<\textbf{1}$; see \cite{Hautphenne2013Extinction}). For the process $\{\emph{\textbf{Z}}_{n}; n\geq0\}$ considered in this paper, by (\ref{tran}), we can write $\emph{\textbf{f}}(\emph{\textbf{s}})=(\emph{\textbf{s}})$ componentwise as
 \begin{equation}\label{ite}
 1-q_i+q_is_{i+1}f(s_1)=s_i,~~~~~~(i\geq1).
 \end{equation}
 Iterating (\ref{ite}) to infinity and using (\ref{e1.2}) give $g(f(s_1))=s_1$. Then from Proposition \ref{pro} and (\ref{ite}), we know $\emph{\textbf{q}}\equiv\tilde{\emph{\textbf{q}}}$.

GWCIMTs have been investigated by several authors. Moyal \cite{J1962Multiplicative} considers a general type space and proves that the extinction probability is a solution of $\emph{\textbf{s}}=\emph{\textbf{g}}(\emph{\textbf{s}})$, where $\emph{\textbf{g}}(\cdot)$ is the progeny generating function of such processes. Hautphenne et al \cite{Hautphenne2013Extinction} give a sufficient condition for the appearance of the situation $\tilde{\emph{\textbf{q}}}=\textbf{1}$ and $\tilde{\emph{\textbf{q}}}<\textbf{1}$ in terms of truncated Galton-Watson processes and discuss the connection between the convergence norm of the mean progeny matrix and  the extinction criteria in the irreducible and reducible cases. Braunsteins and Hautphenne \cite{2017arXiv170602919B} consider a so-called lower Hessenberg branching processes with countably many types, which restrict individuals of type $i$ to give birth to type $j\leq i+1$ only. They prove the existence of a continuum of fixed points of the progeny generating function and show that the minimum of this continuum is $\emph{\textbf{q}}$ and the maximum is $\tilde{\emph{\textbf{q}}}$. Sagitov \cite{Sagitov2013Linear} considers a linear-fractional GWCIMT whose mean progeny matrix $\emph{\textbf{A}}$ has a form of
\[
\emph{\textbf{A}}=\emph{\textbf{H}}+a\emph{\textbf{H}}\emph{\textbf{1}}^{t}\emph{\textbf{g}},
\]
where $\emph{\textbf{H}}=(h_{ij})_{i,j=1}^{\infty}$ is a sub-stochastic matrix, $\emph{\textbf{g}}=(g_1,g_2,\cdots)$ is a proper probability distribution, $\emph{\textbf{1}}^t$ is the transpose of the row vector $\textbf{1}=(1,1,\cdots)$ and $a$ is a positive constant.
 For linear-fractional GWCIMT, Sagitov presents criteria for $\tau$-positive recurrence \cite[Theorem 8]{2017arXiv170602919B} which depend on the value of $d(s)=\sum_{n\geq1}d_ns^n$ in the point of its convergence radius, where
 \[
 d_n=\emph{\textbf{g}}\emph{\textbf{H}}^n\emph{\textbf{1}}^t.
 \]
 While it is not easy to know about the specific form of $d(s)$ to get the recurrence property of linear-fractional GWCIMT.
 \par
  In this paper, we first derive criteria for convergence radius $\gamma$ of $\emph{\textbf{M}}$ to be greater, smaller than and equal with 1. We give formula of calculating $\gamma$ and show that $\gamma$, in supercritical case, is actually the extinction probability of a Galton-Watson process, and the method of which are more efficient than Lemma \ref{L3.1} below. Next, we give clear criteria for $\emph{\textbf{M}}$ to be $\gamma$-transient, $\gamma$-positive and $\gamma$-null recurrent from which the ergodic property of the process is discussed. The criteria only rely on the properties of lifetime probability generating function $g(s)$ which are more clear and easier to be verified than the criterion of linear-fractional GWCIMT. Labeling each individual to translate the process to GWCIMT helps to know more about the evolution of Galton-Watson process in which individuals have variable lifetimes. We can see the exact geometric growth of total population size of $\{\emph{\textbf{Z}}_n;n\geq0\}$ in supercritical case due to $\gamma$. At last, an example of geometric law of lifetimes is given to illustrate our results.
\par
\vspace{5mm}
 \setcounter{section}{2}
 \setcounter{equation}{0}
 \setcounter{theorem}{0}
 \setcounter{lemma}{0}
 \setcounter{definition}{0}
 \setcounter{corollary}{0}
 \setcounter{proposition}{0}

\noindent {\large \bf 2. Results and Proofs}
 \vspace{3mm}
 \par
  \par
\par
 Denote convergence norm of $\emph{\textbf{M}}$ by $\rho$ then $\rho=\gamma^{-1}$, where $\emph{\textbf{M}}$ is defined in (\ref{mean}). Further, let vector $\emph{\textbf{v}}=\{v^{(i)}\}_{i>0}$ and $\emph{\textbf{u}}^{T}=\{u^{(i)}\}_{i>0}$ be the $\gamma$-invariant measure and vector of $\emph{\textbf{M}}$ respectively, with $\emph{\textbf{u}},\emph{\textbf{v}}>\textbf{0}$ and
\begin{eqnarray*}
\gamma\emph{\textbf{v}}\emph{\textbf{M}}=\emph{\textbf{v}},~~~~~ \gamma\emph{\textbf{M}}\emph{\textbf{u}}=\emph{\textbf{u}},
\end{eqnarray*}
where we write $\emph{\textbf{v}}>\textbf{0}$ to indicate that $v^{(i)}\geq 0$ for all $i>0$ but at least one component strictly greater.
Denote the $(k\times k)$ northwest corner truncation matrix of $\emph{\textbf{M}}$ by $\emph{\textbf{M}}^{(k)}$, the spectral radius of $\emph{\textbf{M}}^{(k)}$ by $\rho_k$.
\par
 It is known that $\tilde{\emph{\textbf{q}}}<1$ if $\rho>1$ and $\tilde{\emph{\textbf{q}}}=1$ if $\rho\leq1$ in the irreducible case (see \cite{Hautphenne2013Extinction}). Convergence norm $\rho$ plays a key role in investigating Galton-Watson processes with countably many types. However, it is not easy to evaluate $\rho$. In this paper we provide a more efficient method to evaluate the convergence norm of $\emph{\textbf{M}}$ (an unique solution of an equation which relies on the specific form of $\emph{\textbf{M}}$) than Lemma \ref{L3.1} below (approximating method), when $\emph{\textbf{M}}$ is the mean progeny matrix of a Galton-Watson process in which individuals have variable lifetimes. To do so, we introduce the following two Lemmas which correspond to Theorem 6.8 and 6.4 in Seneta \cite{Seneta2006Non}.
\begin{lemma}\label{L3.1}
If the matrix $\textbf{M}^{(k)}$ is irreducible for all $k>0$, then $\rho_k\uparrow\rho$. \hfill $\Box$
\end{lemma}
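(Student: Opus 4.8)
My plan is to establish the two bounds $\limsup_{k}\rho_{k}\le\rho$ and $\liminf_{k}\rho_{k}\ge\rho$ and then to observe that $\rho_{k}$ is nondecreasing, so that the resulting equality $\lim_{k}\rho_{k}=\rho$ is in fact a monotone convergence $\rho_{k}\uparrow\rho$. Monotonicity is immediate: $\mathbf{M}^{(k)}$ is a principal submatrix of $\mathbf{M}^{(k+1)}$, hence $((\mathbf{M}^{(k)})^{n})_{11}\le((\mathbf{M}^{(k+1)})^{n})_{11}$ for every $n$, and taking $n$-th roots and letting $n\to\infty$ gives $\rho_{k}\le\rho_{k+1}$. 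I will use two elementary facts about the mean matrix $\mathbf{M}$ of (\ref{mean}): first, $((\mathbf{M}^{(k)})^{n})_{11}\le(\mathbf{M}^{n})_{11}$ for all $k,n$, since the length-$n$ paths from type $1$ to type $1$ staying inside $\{1,\dots,k\}$ are a subset of all length-$n$ paths from $1$ to $1$; and second, by (\ref{mean}) a length-$n$ path starting at type $1$ either resets to type $1$ or advances by one type at each step, so it never leaves $\{1,\dots,n+1\}$ — consequently there are only finitely many such paths, $(\mathbf{M}^{n})_{11}<\infty$, and $((\mathbf{M}^{(k)})^{n})_{11}=(\mathbf{M}^{n})_{11}$ as soon as $k\ge n+1$.

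The backbone of both bounds is the identity $\rho=\sup_{n\ge1}((\mathbf{M}^{n})_{11})^{1/n}$. Indeed $(\mathbf{M}^{n+m})_{11}\ge(\mathbf{M}^{n})_{11}(\mathbf{M}^{m})_{11}$, so $\log(\mathbf{M}^{n})_{11}$ is superadditive, with all terms finite because the loop $1\to1$ has weight $m_{11}=mq_{1}>0$ by A2 and A3; hence Fekete's lemma gives $\frac1n\log(\mathbf{M}^{n})_{11}\to\sup_{n}\frac1n\log(\mathbf{M}^{n})_{11}$, and by (\ref{e4.1}) this limit is $\log\rho$, so $((\mathbf{M}^{n})_{11})^{1/n}\le\rho$ for all $n$ while the supremum over $n$ equals $\rho$. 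The same reasoning applied to each finite irreducible matrix $\mathbf{M}^{(k)}$ yields $\rho_{k}=\sup_{n\ge1}((\mathbf{M}^{(k)})^{n})_{11}^{1/n}$. The upper bound now follows at once: $((\mathbf{M}^{(k)})^{n})_{11}^{1/n}\le((\mathbf{M}^{n})_{11})^{1/n}\le\rho$ for every $n$, so $\rho_{k}\le\rho$. For the lower bound, fix $N\ge1$; by the observation above $((\mathbf{M}^{(k)})^{N})_{11}=(\mathbf{M}^{N})_{11}$ for every $k\ge N+1$, whence $\rho_{k}\ge((\mathbf{M}^{(k)})^{N})_{11}^{1/N}=((\mathbf{M}^{N})_{11})^{1/N}$ for every $k\ge N+1$; letting $k\to\infty$ gives $\liminf_{k}\rho_{k}\ge((\mathbf{M}^{N})_{11})^{1/N}$, and taking the supremum over $N$ gives $\liminf_{k}\rho_{k}\ge\rho$.

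Combining the two bounds with the monotonicity of $\rho_{k}$ gives $\rho_{k}\uparrow\rho$. The only step that is more than bookkeeping is the identity $\rho=\sup_{n}((\mathbf{M}^{n})_{11})^{1/n}$ — Fekete's lemma for the superadditive sequence $\log(\mathbf{M}^{n})_{11}$ combined with (\ref{e4.1}); everything else rests on the ``reset-or-advance'' structure of (\ref{mean}), which confines each finite path to a finite block of types and is precisely what makes the truncations $\mathbf{M}^{(k)}$ approximate $\mathbf{M}$ so tightly. For a general irreducible infinite matrix this approximation is more delicate (the number of length-$n$ paths may be infinite), and the statement is then Theorem 6.8 of Seneta \cite{Seneta2006Non}, which — as $\mathbf{M}$ is irreducible by A2 — one may alternatively simply cite.
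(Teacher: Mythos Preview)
Your argument is correct. The paper, however, does not prove this lemma at all: it is stated with a terminal $\Box$ and introduced as ``correspond[ing] to Theorem 6.8 \dots\ in Seneta \cite{Seneta2006Non}'', i.e.\ it is simply quoted as a known result about general irreducible nonnegative matrices. What you have written is a self-contained elementary proof tailored to this particular $\mathbf{M}$: the ``reset-or-advance'' structure of (\ref{mean}) forces every length-$n$ walk from type $1$ to stay inside $\{1,\dots,n+1\}$, so the truncated powers $((\mathbf{M}^{(k)})^{n})_{11}$ stabilise at $(\mathbf{M}^{n})_{11}$ once $k\ge n+1$, and the rest is Fekete's lemma. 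This buys independence from Seneta's general machinery at the cost of being specific to this model; the paper's citation covers arbitrary irreducible $\mathbf{M}$ (where paths need not be confined to finitely many types and your stabilisation step fails) but imports more theory. You already note this trade-off in your final paragraph, so you are aware that a one-line citation would also suffice.
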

\begin{lemma}\label{L3.2} The matrix $\textbf{M}$ is $\gamma$-positive if and only if the $\gamma$-invariant measure and $\gamma$-invariant vector satisfy
\begin{eqnarray*}
\textbf{v}\textbf{u}=\sum_{i>0}u^{(i)}v^{(i)}<\infty.
\end{eqnarray*}\hfill $\Box$
\end{lemma}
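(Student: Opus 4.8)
The plan is to use a Doob transform to turn $\textbf{M}$ into a genuine stochastic matrix, and then read the lemma off the classical dichotomy for Markov chains: an irreducible aperiodic chain is positive recurrent if and only if its invariant measure has finite total mass. First observe that $\textbf{u}$ and $\textbf{v}$ are strictly positive in every coordinate: if $u^{(k)}=0$, then $0=u^{(k)}=\gamma(\textbf{M}\textbf{u})_{k}$ forces $m_{kj}u^{(j)}=0$ for all $j$, and iterating this along a path furnished by irreducibility contradicts $\textbf{u}>\textbf{0}$; likewise for $\textbf{v}$. So I may define
\[
\widehat{\textbf{P}}=(\hat p_{ij})_{i,j\geq 1},\qquad \hat p_{ij}=\gamma\,m_{ij}\,u^{(j)}/u^{(i)},
\]
and then $\sum_{j}\hat p_{ij}=\gamma(\textbf{M}\textbf{u})_{i}/u^{(i)}=1$ by $\gamma\textbf{M}\textbf{u}=\textbf{u}$, so $\widehat{\textbf{P}}$ is stochastic; $(\widehat{\textbf{P}}^{n})_{ij}=\gamma^{n}(\textbf{M}^{n})_{ij}u^{(j)}/u^{(i)}$, so $\widehat{\textbf{P}}$ is irreducible and, inheriting the self-loop $\hat p_{11}=\gamma m_{11}=\gamma m q_{1}>0$ from (\ref{mean}) and A2 and A3, aperiodic; and crucially $(\widehat{\textbf{P}}^{n})_{jj}=\gamma^{n}(\textbf{M}^{n})_{jj}$ for every $j$. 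A one-line computation using $\gamma\textbf{v}\textbf{M}=\textbf{v}$ shows that $\pi_{i}:=v^{(i)}u^{(i)}$ is an invariant measure for $\widehat{\textbf{P}}$, since $(\pi\widehat{\textbf{P}})_{j}=\gamma u^{(j)}\sum_{i}v^{(i)}m_{ij}=\gamma u^{(j)}(\textbf{v}\textbf{M})_{j}=u^{(j)}v^{(j)}=\pi_{j}$; hence $\textbf{v}\textbf{u}=\sum_{i}\pi_{i}$ is precisely the total mass of this invariant measure.

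Given this dictionary the two implications are short. If $\textbf{v}\textbf{u}<\infty$, then $\pi/(\textbf{v}\textbf{u})$ is an invariant probability for the irreducible $\widehat{\textbf{P}}$, so $\widehat{\textbf{P}}$ is positive recurrent, and being aperiodic it satisfies $(\widehat{\textbf{P}}^{n})_{jj}\to\pi_{j}/(\textbf{v}\textbf{u})>0$; hence $\gamma^{n}(\textbf{M}^{n})_{jj}\to v^{(j)}u^{(j)}/(\textbf{v}\textbf{u})>0$, and more generally $\gamma^{n}(\textbf{M}^{n})_{ij}=(\widehat{\textbf{P}}^{n})_{ij}u^{(i)}/u^{(j)}\to u^{(i)}v^{(j)}/(\textbf{v}\textbf{u})>0$ for all pairs, so by Definition \ref{def} the matrix $\textbf{M}$ is $\gamma$-positive. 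Conversely, if $\textbf{M}$ is $\gamma$-positive then $(\widehat{\textbf{P}}^{n})_{jj}=\gamma^{n}(\textbf{M}^{n})_{jj}$ converges to a strictly positive limit, which forces state $j$ to be recurrent and, the limit being positive, positive recurrent; hence $\widehat{\textbf{P}}$ has an invariant probability, which by uniqueness of the invariant measure of an irreducible recurrent chain is a scalar multiple of $\pi$, so $\textbf{v}\textbf{u}=\sum_{i}\pi_{i}<\infty$. This is the stated equivalence.

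There is no single hard estimate here; the difficulty, such as it is, lies in setting things up correctly. The points that need care are: verifying $u^{(i)}>0$ and $v^{(i)}>0$ so that $\widehat{\textbf{P}}$ and $\pi$ are well defined; checking aperiodicity of $\widehat{\textbf{P}}$ so that $(\widehat{\textbf{P}}^{n})_{jj}$ genuinely converges rather than merely along a subsequence (in a periodic situation one would pass to the period-$d$ subsequence with the same conclusion); and invoking the standard facts that an irreducible chain with a finite invariant measure is positive recurrent, that positive recurrence is equivalent to the diagonal $n$-step probabilities having a strictly positive limit, and that a recurrent irreducible chain has a unique invariant measure up to scalars. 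Note that this argument never assumes $\textbf{M}$ to be $\gamma$-recurrent in advance: in the $\gamma$-transient and $\gamma$-null recurrent cases one gets $\textbf{v}\textbf{u}=\infty$ and $\textbf{M}$ is not $\gamma$-positive, so the equivalence holds there too. Of course one may instead simply quote Seneta \cite[Theorem 6.4]{Seneta2006Non} after matching notation (``convergence radius $\gamma\leftrightarrow R$'', ``$\gamma$-invariant $\leftrightarrow R$-invariant'', ``$\gamma$-positive $\leftrightarrow R$-positive''), which is how the lemma is used in the sequel; the transform above is essentially the proof behind that theorem specialized to the present setting.
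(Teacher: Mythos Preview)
Your argument is correct. The paper does not actually prove this lemma: it is stated with a terminal $\Box$ and attributed to Seneta \cite[Theorem~6.4]{Seneta2006Non}, so there is nothing to compare at the level of details. What you have written is essentially the proof of that theorem specialised to the present $\textbf{M}$: the Doob $h$-transform $\hat p_{ij}=\gamma m_{ij}u^{(j)}/u^{(i)}$ reduces $\gamma$-positivity of $\textbf{M}$ to ordinary positive recurrence of an irreducible aperiodic stochastic matrix, and the identification $\pi_i=v^{(i)}u^{(i)}$ turns $\textbf{v}\textbf{u}$ into the total mass of the invariant measure. Your use of the self-loop $m_{11}=mq_1>0$ (from (\ref{mean}) together with A2 and A3) to secure aperiodicity is the one place where you exploit the specific structure of $\textbf{M}$; in the general Seneta setting one would instead pass to the periodic subsequence, as you note. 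The gain from spelling this out is a self-contained treatment and the explicit limiting relation $\gamma^{n}(\textbf{M}^{n})_{ij}\to u^{(i)}v^{(j)}/(\textbf{v}\textbf{u})$, which is in fact used implicitly later in the paper; the gain from simply citing Seneta is brevity. Either route is fine here.
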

Define $Q_k=\prod_{i=1}^{k}q_i$ if $k\geq 1$ and $Q_0=1$, $F(s)=m\sum_{j=1}^{\infty}Q_js^j$. Denote the radius of convergence of the power series $F(s)$ by $R$. Noting that $Q_k=P(L>k)$ by (\ref{qk}) below, then $F(1)=ml$. Now we state our main theorems.
\begin{theorem} \label{th3.3}
(1) If $ml<1$ and $F(R)\geq1$, then $\gamma=\rho^{-1}>1$ is the unique solution of $F(s)=1$ in $(1,R]$.\\
(2) If $ml<1$ and $F(R)<1$, then $\rho=R^{-1}$.\\
(3) If $ml=1$, then $\rho=1$.\\
(4) If $ml>1$, then $\rho>1$ and $\gamma=\rho^{-1}$ is the extinction probability of a Galton-Watson process whose offspring generating function is
\begin{displaymath}
B(s)=\frac{1+mg(s)}{1+m}.
\end{displaymath}

\end{theorem}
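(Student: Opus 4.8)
The plan is to pin down $\gamma$ (equivalently $\rho=\gamma^{-1}$) from the Green's function of $\mathbf{M}$ at type $1$, which is explicit because of the rigid shape of $\mathbf{M}$ in (\ref{mean}). From type $i$ the only transitions are to type $1$ or to type $i+1$, so every first-return path from type $1$ to type $1$ has the form $1\to 2\to\cdots\to j\to 1$, of length $j$ and weight $q_1\cdots q_{j-1}\cdot mq_j=mQ_j$. Hence the first-return generating function is $F(s)=\sum_{j\ge 1}mQ_js^j$, and summing over the number of excursions gives
\[
U(s):=\sum_{n\ge 0}(\mathbf{M}^n)_{11}s^n=\frac{1}{1-F(s)} .
\]
By (\ref{e4.1}), $\lim_n\{(\mathbf{M}^n)_{11}\}^{1/n}=\rho$, so the radius of convergence of the power series $U$ equals $\gamma$, and the theorem reduces to locating the smallest positive singularity of $U$.

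Since $U$ has nonnegative coefficients, Pringsheim's theorem makes that singularity a positive real number, and it must be either the singular point of $F$ or a zero of $1-F$. The coefficients $Q_j=\mathbb{P}(L\ge j)$ satisfy $0<Q_j\le 1$ by assumption A2, so $F$ has radius of convergence $R\ge 1$ (possibly $+\infty$), $F$ is continuous and strictly increasing on $[0,R)$ with $F(0)=0$, by Pringsheim applied to $F$ itself the point $R$ is singular for $F$ when $R<\infty$, and $F(1)=m\sum_{j\ge 1}Q_j=ml$ (Abel's theorem when $R=1$). Thus $F(s)=1$ has at most one root $s^{\ast}$ in $(0,R]$, and $|F(s)|\le F(|s|)$ on $\{|s|<R\}$ shows $U$ is analytic on $\{|s|<\min(s^{\ast},R)\}$ and singular at $\min(s^{\ast},R)$. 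The four assertions now drop out: if $ml<1$ and $F(R)<1$ there is no such root, the nearest singularity is $R$, and $\rho=R^{-1}$; if $ml<1$ and $F(R)\ge 1$ then $F(1)<1\le F(R)$ forces a unique $s^{\ast}\in(1,R]$ with $F(s^{\ast})=1$, so $\gamma=\rho^{-1}=s^{\ast}>1$; if $ml=1$ then $s^{\ast}=1$, so $\rho=1$; if $ml>1$ then $F(0)=0<1<ml=\lim_{s\uparrow 1}F(s)$ forces $s^{\ast}\in(0,1)$, so $\gamma=s^{\ast}\in(0,1)$ and $\rho>1$.

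It remains, in case (4), to identify $\gamma=s^{\ast}$ as an extinction probability. From $h_k=(1-q_{k+1})Q_k$ and the telescoping identity $q_{k+1}Q_k=Q_{k+1}$ one gets $g(s)=\sum_{k\ge 0}(Q_k-Q_{k+1})s^k$, and summation by parts yields the closed form
\[
F(s)=\frac{ms\bigl(1-g(s)\bigr)}{1-s}\qquad(s\ne 1).
\]
Since $0<\gamma<1$, the identity $F(\gamma)=1$ is equivalent to $(1+m)\gamma=1+m\gamma g(\gamma)$, that is, to the fixed-point equation $B(\gamma)=\gamma$ for the offspring generating function $B$ of the statement, which is a probability generating function with $B(1)=1$, $B(0)>0$ and $B$ convex on $[0,1]$; hence its only fixed points in $[0,1]$ are $1$ and a single point of $(0,1)$. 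As $\gamma\in(0,1)$ is a fixed point, it must be the smaller one, i.e.\ the extinction probability of the Galton-Watson process with offspring generating function $B$. (Alternatively, $\rho>1$ in case (4) can be obtained from $ml>1\Rightarrow\tilde{\mathbf{q}}<1\Rightarrow\rho>1$ via Proposition \ref{pro} and the quoted equivalence $\tilde{\mathbf{q}}<1\Leftrightarrow\rho>1$.)

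The step I expect to demand real care is the singularity analysis of $U$: separating the behaviours $\lim_{s\uparrow R}F(s)<1$, $=1$ and $=+\infty$, handling $R=1$ (where Abel's theorem gives $F(1^-)=ml$) and $l=\infty$ (where $R=1$ and $F(1^-)=+\infty$), and checking in each case that $U$ has no complex singularity strictly inside its positive real singular point---which is precisely where $|F(s)|\le F(|s|)$ and Pringsheim's theorem are needed. The algebraic manipulations of $F$ and the least-fixed-point argument are then routine.
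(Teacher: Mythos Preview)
Your argument is correct and arrives at the same governing equation $F(s)=1$ as the paper, but by a genuinely different route. The paper works with the $k\times k$ truncations $\mathbf{M}^{(k)}$: it computes the characteristic polynomial $f^{(k)}(\lambda)$ by a cofactor recursion, rewrites $s^k f^{(k)}(s^{-1})=1-F_k(s)$ with $F_k(s)=m\sum_{j\le k}Q_js^j$, so that each Perron--Frobenius eigenvalue $\rho_k$ satisfies $F_k(\rho_k^{-1})=1$, and then invokes Seneta's approximation lemma $\rho_k\uparrow\rho$ to pass to the limit. You instead exploit the special shape of $\mathbf{M}$ to read off the first-return weights $mQ_j$ at type $1$ directly, obtain the renewal identity $\sum_n(\mathbf{M}^n)_{11}s^n=(1-F(s))^{-1}$, and locate $\gamma$ as the first positive singularity via Pringsheim. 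Your approach avoids the truncation machinery altogether and is more conceptual; it also yields for free that $\mathbf{M}$ is $\gamma$-recurrent precisely when $F(\gamma)=1$ (i.e.\ when the equation has a solution), which is the transience criterion the paper proves separately at the start of Theorem~\ref{th3.4}. The paper's route, on the other hand, is more elementary in that it uses only finite-dimensional linear algebra plus one black-box lemma, with no complex analysis. The closing identification of $\gamma$ with the extinction probability of $B$ is essentially the same algebra in both proofs (and both in fact require $B(s)=(1+msg(s))/(1+m)$, as in the paper's displayed equation~(\ref{ps}); the missing $s$ in the theorem statement is a typo).
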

\begin{proof} The characteristic polynomial of $\emph{\textbf{M}}^{(k)}$ is
\begin{eqnarray*}
f^{(k)}(
\lambda)=|\lambda E-\emph{\textbf{M}}^{(k)}|&=&\left|\begin{array}{ccccc}
    \lambda-mq_1 &    -q_1    & 0       & \cdots  &0 \\
    -mq_2        &    \lambda & -q_2    & \cdots  &0 \\
    -mq_3        &    0       & \lambda & \cdots  &0 \\
    \vdots       &    \vdots  & \vdots  & \ddots  &0 \\
    -mq_{k-1}    &    0       & 0       & \cdots  &-q_{k-1}\\
    -mq_{k}      &    0       & 0       & \cdots  &\lambda \\
\end{array}\right| \\
                           &=&(-mq_k)(-1)^{k+1}(-q_1(-q_2)\cdots(-q_{k-1}))+\lambda(-1)^{2k}f^{(k-1)}(\lambda) \\
                           &=&\lambda f^{(k-1)}(\lambda)-mq_1q_2\cdots q_{k},
\end{eqnarray*}
\par
 where $E$ is the identity matrix. Let $s=\lambda^{-1}$, $\phi_k(s)=f^{(k)}(s^{-1})$. Hence the recursion becomes $s\phi_{k}(s)=\phi_{k-1}(s)-msQ_k$ for $k>1$, i.e., $s^j\phi_j(s)-s^{j-1}\phi_{j-1}(s)=-mQ_js^{j}$ valid for $j>1$. Now $\phi_1(s)=s^{-1}-mQ_1$, so summing gives
$$
s^k\phi_k(s)=1-m\sum_{j=1}^{k}Q_js^j,~~~~~~(k>1).
$$
Denote the subtracted term on right-hand side by $F_k(s)$.
\par
It follows that the positive zeros of $f^{(k)}(\lambda)$ are in one to one correspondence with the positive solutions of
\begin{equation}\label{equ}
F_k(s)=1.
\end{equation}
Clearly $F_k(0)=0$ and $F_k(s)$ is a polynomial which is convex increasing in $[0,\infty)$, hence (\ref{equ}) has exactly one positive root, denoted by $s_k$. It is easy to see that
\begin{equation}\label{qk}
Q_k=\sum_{j=k}^{\infty}h_j,
\end{equation}
which implies
$$
F_k(1)=m\sum_{j=1}^{k}Q_j=m\sum_{j\geq1}\sum_{i\geq j}h_i-r_k,~~~\mbox{where}~r_k:=m\sum_{j>k}\sum_{i\geq j}h_i.
$$
 So if $ml\leq 1$, then $F_k(1)\leq 1$ for all $k$ and hence (\ref{equ}) has a unique solution in $[1,\infty)$. The solution $s_k=1$ holds if and only if $ml=1$ and $r_k=0$ which contradicts assumption A2. Next, when $ml>1$, it is clear that $r_k\downarrow0$ as $k\uparrow\infty$. So there exists a positive number $N$, if $k>N$, $F_k(1)>1$ and then (\ref{equ}) has a unique solution $s_k\in(0,1)$. If $k<N$, then $F_k(1)<1$ and (\ref{equ}) has a unique solution $s_k\in(1,\infty)$.
\par
According to Lemma \ref{L3.1}, the convergence norm of $\emph{\textbf{M}}$ is $\rho=\lim_{k\rightarrow\infty}\rho_k=\lim_{k\rightarrow\infty}s_k^{-1}$. Hence we need to consider  $F(s)=\lim_{k\rightarrow\infty}F_k(s)$. Let $\hat{s}$ be the unique solution of
\begin{equation}\label{eq1}
F(s)=1,
\end{equation}
if there is a solution.
\par
Noting that $F(1-)=ml$, then there clearly is such a solution if $ml>1$ and $\hat{s}\in(0,1)$. Hence we first consider the case $ml>1$ and $s\in(0,1)$. It follows from the definition of $F_k(s)$ and (\ref{qk}) that
\[
F(s)=m\sum_{j\geq1}s^j\sum_{i\geq j}h_i=ms\frac{1-g(s)}{1-s},
\]
where $g(s)=\sum_{i\geq0}h_is^i$. Hence the limiting form of $(\ref{equ})$ becomes $1-s=ms(1-g(s))$, i.e.,
\begin{equation}\label{ps}
B(s):=\frac{1+msg(s)}{1+m}=s.
\end{equation}
\par
It can be verified that $B(s)$ is a probability generating function and hence (\ref{ps}) can be interpreted as the equation satisfied by the extinction probability of a Galton-Watson process whose offspring probability generating function is $B(s)$ and offspring mean is $m(1+l)/(1+m)$. This exceeds unity if and only if $ml>1$. We are assuming this condition at present, so $\hat{s}$ is this extinction probability (see \cite{Athreya1972Branching}). Also, $\hat{s}>B(0)=(1+m)^{-1}$, implying that $\rho\in(1,1+m)$. Then $\emph{(4)}$ of the theorem is proved.
\par
Now there is an important point to be made here because (\ref{ps}) also has the solution $s=1$ for any value of $ml$. This solution is an artefact of the multiplication by $1-s$ of (\ref{equ}) required to produce (\ref{ps}). This addition solution is irrelevant to the problem.
\par
If $ml=1$, then (\ref{ps}) has the unique solution $\hat{s}=1$. Hence $\rho=1$ in this case and $\emph{(3)}$ follows.
\par
In the case that $ml<1$ it is not clear whether (\ref{eq1}) has a solution. Recall that $F(1)=ml$ implies $R\geq1$. Hence if $R>1$ and $F(R-)\geq1$ (can be $\infty$), then it is clear that there is a unique solution $\hat{s}=\rho^{-1}\in(1,R]$ and $\emph{(1)}$ follows.
\par
However, if $R>1$ and $F(R-)<1$, or $R=1$, (\ref{eq1}) has no solution. In these cases, $\rho=\lim_{k\rightarrow\infty}\rho_k$ and $\rho_k^{-1}=s_k\downarrow R$ from the definition of $F_k(s)$, implying $\rho=1/R$. Then $\emph{(2)}$ is proved and the proof of the theorem is complete.
\hfill $\Box$
\end{proof}
 \begin{remark}\label{mar1}
In case (2) of the above theorem, (\ref{eq1}) has no solution and hence the $\gamma$-invariant vector and measure do not exist.
\end{remark}
\par
In the following theorem, we derive the ergodic property of $\{\emph{\textbf{Z}}_n;n\geq0\}$ due to Definition \ref{def}.
\begin{theorem}\label{th3.4}
(1) If $ml<1$ and $F(R)<1$, then $\{\textbf{Z}_n;n\geq0\}$ is transient. \\
(2) If $ml<1$ and $F(R)\geq1$, or $ml=1$ and $g''(1-)<\infty$, or $ml>1$, then $\{\textbf{Z}_n;n\geq0\}$ is positive recurrent.\\
(3) If $ml=1$ and $g''(1-)=\infty$, then $\{\textbf{Z}_n;n\geq0\}$ is null recurrent.
\end{theorem}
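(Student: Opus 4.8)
The plan is to reduce the whole trichotomy to the scalar renewal structure hidden in $\emph{\textbf{M}}$: by Definition~\ref{def}, $\{\emph{\textbf{Z}}_n;n\geq0\}$ is transient, positive recurrent, or null recurrent exactly according as $\emph{\textbf{M}}$ is $\gamma$-transient, $\gamma$-positive, or $\gamma$-null, so it is enough to classify $\emph{\textbf{M}}$ through the diagonal entries $(\emph{\textbf{M}}^n)_{11}$.

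\emph{Step 1 (a renewal identity for $(\emph{\textbf{M}}^n)_{11}$).} First I would use the sparsity of $\emph{\textbf{M}}$ in (\ref{mean}): the only walk of length $n$ from type $1$ back to type $1$ that avoids type $1$ in between is $1\to2\to\cdots\to n\to1$, of weight $q_1\cdots q_{n-1}(mq_n)=mQ_n$. Hence the ``first return to type $1$'' generating function is exactly $\sum_{n\geq1}mQ_ns^n=F(s)$, and decomposing $(\emph{\textbf{M}}^n)_{11}$ at the first return to $1$ gives $\sum_{n\geq0}(\emph{\textbf{M}}^n)_{11}s^n=(1-F(s))^{-1}$ on its disc of convergence. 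Nonnegativity of the coefficients forces $F(s)\leq1$ for $0\leq s<\gamma$, and monotone convergence gives $\sum_{n\geq0}\gamma^n(\emph{\textbf{M}}^n)_{11}=\lim_{s\uparrow\gamma}(1-F(s))^{-1}$. Therefore $\emph{\textbf{M}}$ is $\gamma$-recurrent iff $F(\gamma-)=1$ and $\gamma$-transient iff $F(\gamma-)<1$. When $F(\gamma-)=1$ the numbers $f_n:=mQ_n\gamma^n$ form a probability law on $\{1,2,\dots\}$ which is aperiodic because $Q_n>0$ for all $n$ (assumption A2), so the discrete renewal theorem yields $\gamma^n(\emph{\textbf{M}}^n)_{11}\to\mu^{-1}$ with $\mu=\sum_{n\geq1}nf_n=\gamma F'(\gamma-)\in(0,\infty]$; hence $\emph{\textbf{M}}$ is $\gamma$-positive iff $F'(\gamma-)<\infty$ and $\gamma$-null iff $F'(\gamma-)=\infty$. (The same positive/null dichotomy follows from Lemma~\ref{L3.2} after verifying that $v^{(i)}=\gamma^{i-1}Q_{i-1}$ and $u^{(i)}=m\gamma^{1-i}Q_{i-1}^{-1}\sum_{j\geq i}\gamma^jQ_j$ are the $\gamma$-invariant measure and vector, for which $\emph{\textbf{v}}\emph{\textbf{u}}=m\sum_{i\geq1}\sum_{j\geq i}\gamma^jQ_j=m\sum_{j\geq1}jQ_j\gamma^j=\gamma F'(\gamma-)$.)

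\emph{Step 2 (reading off the cases).} Then I would feed in Theorem~\ref{th3.3} together with $F(1)=ml$, $F'(1-)=m\sum_{j\geq1}jQ_j=\tfrac{m}{2}(\EE[L^2]+\EE[L])$, $g''(1-)=\EE[L(L-1)]$ and $\EE[L]=l$. If $ml<1$ and $F(R)<1$, Theorem~\ref{th3.3}(2) gives $\gamma=R$, so $\sum_n\gamma^n(\emph{\textbf{M}}^n)_{11}=(1-F(R-))^{-1}<\infty$ and $\{\emph{\textbf{Z}}_n\}$ is transient, proving (1) (consistent with Remark~\ref{mar1}). If $ml>1$, Theorem~\ref{th3.3}(4) places $\gamma$ in $(1/(1+m),1)$, and since $Q_n\leq1$ forces $R\geq1$ we have $\gamma<R$; thus $F(\gamma)=1$ (this is (\ref{ps}) rewritten) and $F'(\gamma-)<\infty$ because $F$ has radius of convergence past $\gamma$, so the process is positive recurrent. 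If $ml=1$, Theorem~\ref{th3.3}(3) gives $\gamma=1$, so $\emph{\textbf{M}}$ is $\gamma$-recurrent, while $F'(1-)<\infty\iff\EE[L^2]<\infty\iff g''(1-)<\infty$ (using $\EE[L]=l<\infty$); this gives positive recurrence when $g''(1-)<\infty$ and null recurrence when $g''(1-)=\infty$, covering the rest of (2) and all of (3). Finally, if $ml<1$ and $F(R)\geq1$, Theorem~\ref{th3.3}(1) makes $\gamma\in(1,R]$ the root of $F(s)=1$, so $\emph{\textbf{M}}$ is $\gamma$-recurrent, and whenever $\gamma<R$ we again get $F'(\gamma-)<\infty$ and positive recurrence.

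\emph{The main obstacle.} The delicate point, and the step I expect to be the real difficulty, is the boundary sub-case of (2) in which $ml<1$ and $\gamma=R$ is finite --- equivalently $F(R-)=1$. There $\gamma$-positivity is equivalent to $F'(R-)<\infty$, which does not follow from $F(R-)=1$ alone, since one can tune the tail $Q_j$ so that $\sum_jQ_jR^j$ converges while $\sum_jjQ_jR^j$ diverges, forcing $\gamma$-null recurrence. So this case must be handled either by strengthening the hypothesis to $F(R-)>1$, which makes $\gamma<R$ automatic and covers the generic situation, or by adjoining $F'(R-)<\infty$, exactly as the condition $g''(1-)<\infty$ plays this role in the critical case. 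Once Step~1 and Theorem~\ref{th3.3} are in place, all the other cases follow mechanically.
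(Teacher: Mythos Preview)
Your reduction to the renewal identity for $(\emph{\textbf{M}}^n)_{11}$ is a slightly different route from the paper's. The paper instead computes the $\gamma$-invariant measure and vector explicitly,
\[
u^{(k)}=\frac{f^{(k-1)}(\rho)}{Q_{k-1}},\qquad v^{(k)}=\frac{Q_{k-1}}{\rho^{\,k-1}},
\]
invokes Kendall's theorem to get $\gamma$-recurrence from their uniqueness, and then applies Lemma~\ref{L3.2}. Both routes collapse to the same scalar test: $\gamma$-positivity holds iff $F'(\gamma-)<\infty$ (the paper writes this as $\emph{\textbf{v}}\emph{\textbf{u}}=m\sum_{j\ge2}(j-1)Q_j\hat s^{\,j}<\infty$, which equals $\hat s F'(\hat s)-1$). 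Your renewal-theorem argument is a clean shortcut; the paper's eigenvector computation gives the same conclusion with a bit more bookkeeping.

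On the boundary sub-case you isolate ($ml<1$, $\gamma=R$, $F(R-)=1$): the paper does attempt it, and your suspicion is justified. Its argument (inequality~(\ref{ee2})) rewrites $1=F(\hat s)=m\sum_{i\ge1}h_i\sum_{j=1}^{i}\hat s^{\,j}$ and then asserts $\sum_{j=1}^{i}s^{j}\ge i\,s^{i-1}$ for all $s\ge1$ and all $i\ge1$, which would give $mg'(\hat s)\le 1$ and hence $F'(\hat s)<\infty$. But that elementary inequality is false already for $i=4$: at $s=1.1$ one has $\sum_{j=1}^{4}s^{j}\approx 5.105<5.324=4s^{3}$. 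So the paper's proof has a gap at exactly the point you located, and the scenario you worry about is realizable: taking $Q_j=c\,j^{-2}R^{-j}$ (a legitimate nonincreasing survival tail) with $R>1$ and choosing $m$ so that $F(R)=mc\sum_{j\ge1}j^{-2}=1$ gives $ml<1$, $F(R)=1$, yet $F'(R-)=\infty$, which by your Step~1 (or by Lemma~\ref{L3.2}) forces $\gamma$-null recurrence rather than $\gamma$-positivity. Your proposed repair---either strengthen the hypothesis in part~(2) to $F(R)>1$, or adjoin $F'(R-)<\infty$ when $F(R)=1$---is the correct way to state the result.
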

\begin{proof}\ From Seneta \cite[Theorem 6.2]{Seneta2006Non}, for a $\tau$-recurrent matrix $\emph{\textbf{A}}$, there always exists $\tau$-invariant measure. Then by Remark \ref{mar1} and Theorem \ref{th3.3}, $\emph{\textbf{M}}$ is $\gamma$-transient if $ml<1$, $R>1$ and $F(R-)\leq1$, or $ml<1$ and $R=1$. Hence $\{\emph{\textbf{Z}}_n; n\geq 0\}$ is transient in these cases. We assume the existence of $\hat{s}=\rho^{-1}$ where $\hat{s}$ solves (\ref{eq1}) in the rest of proof.
\par
Without loss of generality, let $u^{(1)}=1$, $v^{(1)}=1$. Then by $\emph{\textbf{M}}\emph{\textbf{u}}=\rho\emph{\textbf{u}}$ and $\emph{\textbf{v}}\emph{\textbf{M}}=\rho\emph{\textbf{v}}$, one can present that
\begin{eqnarray}\label{e3.10}
u^{(k)}=\cfrac{f^{(k-1)}(\rho)}{q_1q_2\cdots q_{k-1}},~~~~~~~~~~~~
v^{(k)}=\cfrac{q_1q_2\cdots q_{k-1}}{\rho^{k-1}}.
\end{eqnarray}
Noting that the derivation of the explicit form (\ref{e3.10}) shows that they are the unique outcomes starting from $u^{(1)}=v^{(1)}=1$, which implies that $\emph{\textbf{M}}$ is $\gamma$-recurrent by Kendall's Theorem (See \cite{Continuous-time Markov chains}).
\par
Next, consider the infinite sum
\begin{eqnarray}
\emph{\textbf{v}}\emph{\textbf{u}}&=&\sum_{i\geq1}v^{(i)}u^{(i)}=\sum_{i\geq1}\hat{s}^i\phi_i(\hat{s}) \nonumber \\
&=&\sum_{i\geq1}(1-m\sum_{j=1}^{i}Q_j\hat{s}^j)=m\sum_{i\geq1}\sum_{j>i}Q_j\hat{s}^j          \nonumber\\
&=&m\sum_{j=2}^{\infty}(j-1)Q_j\hat{s}^j=:S.\label{sum}
\end{eqnarray}
The first equality follows from (\ref{e3.10}) and the definition of $\hat{s}$ and $\phi_k(\cdot)$ and the second equality from (\ref{equ}). Then we only need to consider in which case $S$ is finite due to Lemma \ref{L3.2}.
\par
If $ml>1$, then $\hat{s}<1$ by Theorem \ref{th3.3} and hence the sum (\ref{sum}) is finite.
\par
From the definition of $Q_j$ and (\ref{sum}), we obtain
\begin{equation}\label{eee}
S=m\sum_{i=2}^{\infty}h_i\sum_{j=2}^{i}(j-2)\hat{s}^j.
\end{equation}
If $ml=1$, then $\hat{s}=1$ and clearly $S<\infty$ if and only if $g''(1-)<\infty$.\par
If $ml<1$ then $\hat{s}>1$ and reversing the order of summation in the inner sum of (\ref{eee}) it becomes
\[
S=m\sum_{i=2}^{\infty}ih_i\hat{s}^i\sum_{k=0}^{i-2}(1-(k+1)/i)\hat{s}^{-k}.
\]
The monotone convergence theorem implies that the inner sum converges to $\sum_{k\geq0}\hat{s}^{-k}<\infty$. It follows that $S<\infty$ if and only if $g'(\hat{s}-)<\infty$. Recall that $\hat{s}$ solves (\ref{eq1}), then
\begin{eqnarray}
1&=&m\sum_{i\geq1}Q_i\hat{s}^i=m\sum_{j\geq1}\hat{s}^j\sum_{i\geq j}h_i=m\sum_{i\geq1}h_i\sum_{j=1}^{i}\hat{s}^j \nonumber\\
&\geq&m\sum_{i\geq1}h_ii\hat{s}^{i-1}=mg'(\hat{s}).\label{ee2}
\end{eqnarray}
 The inequality follows from the fact that $\sum_{j=1}^is^j\geq is^{i-1}$ if $s\geq1$ for all $i>0$. Then the proof is complete according to the Lemma \ref{L3.2} and Definition \ref{def}.
\hfill $\Box$ \end{proof}
\par
In general, compared to the case that the mean matrix is $\tau$-transient or $\tau$-null in which the invariant vector and measure correspond to the convergence norm of mean matrix do not exist or can not be normalized, it is easier to discuss limit theorems of a GWCIMT when the mean matrix is $\tau$-positive. In this case, the mean matrix behaves like a finite matrix and there are several published results. Sagitov \cite[Propostion 9,10,11]{Sagitov2013Linear} presents basic asymptotic results for positive recurrent linear-fractional GWCIMT in supercritical, critical and subcritical case. Vatutin et al \cite[Theorem 3]{VV} extend mentioned Sagitov result in critical case by considering a general class of the reproduction generating function of individuals and without assuming the finiteness of the second moments for the offspring number of individuls. In supercritical and positive recurrent case, Moy \cite[Theorem 1]{1967Extensions} extends the theorem of Everret, Ulam and Harris to GWCIMT. It tells that if
\begin{equation}\label{last}
\sum_{i=1}^{\infty}v^{(i)}\mathbb{E}\big((\emph{\textbf{Z}}_1\emph{\textbf{u}})^2|\emph{\textbf{Z}}_0=\emph{\textbf{e}}_i\big)
\end{equation}
where $\emph{\textbf{e}}_i=(0,\cdots,0,1,0,\cdots)$ with 1 in the $i$th component, then there is a nonnegative real random variable $Y$ with a finite second moment such that for any $\emph{\textbf{w}}$ satisfying $\emph{\textbf{w}}\leq c\emph{\textbf{u}}^t$ for some positive constant $c$, the sequence $\{\rho^{-n}\emph{\textbf{Z}}_n\emph{\textbf{w}}^t\}$ converges in mean square to $Y\emph{\textbf{v}}\emph{\textbf{w}}^t$. Here $\emph{\textbf{v}}$ and \emph{\textbf{u}} are normalized $\tau$-invariant measure and vector with $\emph{\textbf{v}}\emph{\textbf{u}}=1$.
\par
From Theorem \ref{th3.3} and \ref{th3.4}, Galton-Watson process in which individuals have variable lifetimes covers all the three cases of GWCIMT. In the case of positive recurrence, one can easily apply results of Sagitov, Vatutin and Moy to get the corresponding properties for the process considered in this paper. The total population size of $\{\emph{\textbf{Z}}_n;n\geq0\}$ is actually the population size of $\{\emph{Z}_n;n\geq0\}$, hence constructing a Markovian GWCIMT helps understand the evolution of Galton-Watson process in which individuals have variable lifetimes. Here we only give the property in supercritical case. It is easy to know (\ref{last}) holds if and only if $f''(1)<\infty$, then we have the following proposition by applying Theorem $1$ of \cite{1967Extensions}.
\begin{proposition}
If $ml>1$ and $f''(1)<\infty$, then $\lim_{n\rightarrow\infty}\rho^{-n}\mathbb{E}(\textbf{1}\cdot\textbf{Z}_n)=(1+1/m)S^{-1}$, where $S$ is defined in (\ref{sum}).
\end{proposition}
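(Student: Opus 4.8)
The plan is to read the limit off from the asymptotics of the single entry $(\textbf{M}^{n})_{11}$, combined with the convolution structure forced by the shape (\ref{mean}) of $\textbf{M}$, rather than from the mean-square conclusion of Moy's theorem. A direct appeal to Moy's theorem with $\textbf{w}=\textbf{1}$ would require $\textbf{1}\le c\,\textbf{u}^{t}$, i.e.\ $\inf_{k}u^{(k)}>0$, which by (\ref{e3.10}) may fail when $q_{k}\to0$; the argument below avoids this point.

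I would use two facts. First, as $ml>1$, Theorem \ref{th3.3}(4) gives $\hat{s}:=\rho^{-1}=\gamma\in(\tfrac1{1+m},1)$ with $F(\hat{s})=1$, and the proof of Theorem \ref{th3.4}(2) shows $\textbf{M}$ is $\gamma$-positive recurrent with $\textbf{v}\textbf{u}=S<\infty$ under the normalization $u^{(1)}=v^{(1)}=1$; since $\textbf{M}$ is irreducible (A2) and aperiodic (the self-loop $m_{11}=mq_{1}>0$), the limit theorem for $\gamma$-positive recurrent matrices (see \cite{Seneta2006Non,Jone}) gives $b_{n}:=\rho^{-n}(\textbf{M}^{n})_{11}\to u^{(1)}v^{(1)}/(\textbf{v}\textbf{u})=1/S$, so that $C:=\sup_{n}b_{n}<\infty$. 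Second, by (\ref{mean}) the only type having type $j$ $(j\ge2)$ among its offspring is type $j-1$, so $(\textbf{M}^{n})_{1j}=q_{j-1}(\textbf{M}^{n-1})_{1,j-1}$, and iterating down to type $1$ gives $(\textbf{M}^{n})_{1j}=Q_{j-1}(\textbf{M}^{\,n-j+1})_{11}$ for $n\ge j-1$ and $=0$ otherwise. Since $\textbf{Z}_{0}=\textbf{e}_{1}$ by A1, this yields the finite sum $\mathbb{E}(\textbf{1}\cdot\textbf{Z}_{n})=\sum_{j\ge1}(\textbf{M}^{n})_{1j}=\sum_{k=0}^{n}Q_{k}(\textbf{M}^{\,n-k})_{11}$.

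Combining the two facts and using $\rho^{-n}=\hat{s}^{\,n}$,
\[
\rho^{-n}\,\mathbb{E}(\textbf{1}\cdot\textbf{Z}_{n})=\sum_{k=0}^{n}\big(Q_{k}\hat{s}^{\,k}\big)\,b_{n-k}.
\]
For each fixed $k$ one has $b_{n-k}\to1/S$ as $n\to\infty$ and $0\le b_{n-k}\le C$ for all $n$, while $\sum_{k\ge0}Q_{k}\hat{s}^{\,k}<\infty$ because $\hat{s}<1\le R$. Hence Tannery's theorem (dominated convergence for sums whose summands and length both grow with $n$, with summable majorant $C\,Q_{k}\hat{s}^{\,k}$) gives
\[
\lim_{n\to\infty}\rho^{-n}\,\mathbb{E}(\textbf{1}\cdot\textbf{Z}_{n})=\frac1S\sum_{k\ge0}Q_{k}\hat{s}^{\,k}=\frac1S\Big(Q_{0}+\tfrac1mF(\hat{s})\Big)=\Big(1+\tfrac1m\Big)S^{-1},
\]
using $F(\hat{s})=1$ and $Q_{0}=1$. (Consistently, $\sum_{k\ge0}Q_{k}\hat{s}^{\,k}=\sum_{k\ge1}v^{(k)}$ by (\ref{e3.10}), which is the value Moy's theorem would predict once the mean of its limiting random variable is identified.)

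The one delicate step is the interchange of the $n$-limit with the $k$-summation; it is handled by the uniform bound $C$ on $\rho^{-n}(\textbf{M}^{n})_{11}$ (a consequence of $\gamma$-positive recurrence) and by the convergence of $\sum_{k}Q_{k}\hat{s}^{\,k}$, which holds because $ml>1$ forces $\hat{s}<1$. I note that $f''(1)<\infty$ is not actually used above: it is precisely the condition under which (\ref{last}) is finite, and hence the extra hypothesis that would be needed for the stronger (mean-square) conclusion of Moy's theorem rather than for the convergence of means established here.
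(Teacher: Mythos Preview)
Your argument is correct and takes a genuinely different route from the paper. The paper does not give a detailed proof: it simply notes that condition (\ref{last}) is equivalent to $f''(1)<\infty$ and then invokes Moy's Theorem~1 with $\textbf{w}=\textbf{1}$, reading off the constant from the limit of Moy's theorem. Your approach bypasses Moy's theorem entirely by exploiting the special shape (\ref{mean}) of $\textbf{M}$: the identity $(\textbf{M}^{n})_{1j}=Q_{j-1}(\textbf{M}^{\,n-j+1})_{11}$ reduces everything to the single scalar sequence $b_{n}=\rho^{-n}(\textbf{M}^{n})_{11}$, whose limit $1/S$ is given by the $\gamma$-positive ratio theorem for irreducible aperiodic matrices, and a Tannery/dominated-convergence step finishes. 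This buys you two things: you avoid the verification that $\textbf{1}\le c\,\textbf{u}^{t}$ (needed to plug $\textbf{w}=\textbf{1}$ into Moy's statement, and, as you observe, not obvious from (\ref{e3.10}) when $q_{k}\to0$), and you show that the convergence of \emph{means} holds without the hypothesis $f''(1)<\infty$; that hypothesis is used in the paper's approach only to obtain the $L^{2}$ convergence provided by Moy. Conversely, the paper's route, when it applies, gives the stronger conclusion that $\rho^{-n}\textbf{1}\cdot\textbf{Z}_{n}$ itself converges in mean square, not just its expectation.
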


\begin{example} Consider a geometric law of lifetimes. Let $g(s)=(1+l-ls)^{-1}$, where $l$ is a fixed positive number, then $g'(1)=l$. Denote the offspring mean by $m$ and assume $ml>1$. It follows from (\ref{ps}) that $\rho=(1+m)r$, where $r:=l/(1+l)$.
\par
calculation yields $Q_j=r^j$, so
\[
\sum_{j=1}^{k}Q_js^j=rs\frac{1-(rs)^k}{1-rs},
\]
and hence
\[
s^k\phi_k(s)=1-mrs\frac{1-(rs)^k}{1-rs}.
\]
It follows that
\[
\hat{s}^k\phi_k(\hat{s})=(1+m)^{-k}
\]
where $\hat{s}$ is the unique solution of (\ref{eq1}). We thus conclude that $\emph{\textbf{v}}\emph{\textbf{u}}$ converges, hence the process is positive recurrent in this case.
\par
Further calculation yields
\[
\lim_{n\rightarrow\infty}\rho^{-n}\mathbb{E}(\emph{\textbf{1}}\cdot\emph{\textbf{Z}}_n)=1.
\]
\end{example}
\section*{Acknowledgement}
\par
 This work is substantially supported by the National Natural Sciences Foundations of China (No. 11771452, No. 11971486) and Natural Sciences Foundations of Hunan (No. 2020JJ4674).

\end{document}